\newtheorem{theorem}{Theorem}[section]
\newtheorem{lemma}[theorem]{Lemma}
\newtheorem{cor}[theorem]{Corollary}
\newtheorem{prop}[theorem]{Proposition}
\newtheorem{question}{Question}
\theoremstyle{definition}
\newtheorem{example}[theorem]{Example}
\theoremstyle{remark}
\author[Q. Menet]{Quentin Menet}
\address[Q. Menet]{Service de Probabilit\'e et Statistique, D\'epartement de Math\'ematique\\ Universit\'{e} de Mons\\ Place du Parc 20\\ 7000 Mons (Belgium)}
\email{quentin.menet@umons.ac.be}
\author[D. Papathanasiou]{Dimitris Papathanasiou}
\address[D. Papathanasiou]{Service de Probabilit\'e et Statistique, D\'epartement de Math\'ematique\\ Universit\'{e} de Mons\\ Place du Parc 20\\ 7000 Mons (Belgium)}
\email{dimitrios.papathanasiou@umons.ac.be}
 \thanks{The first author is a Research Associate of the Fonds de la Recherche Scientifique - FNRS}
\keywords{Lineability, spaceability, accumulation points}
\subjclass{15A03, 46B87}
\begin{document}

\title[Accumulation points]{Structure of sets of bounded sequences with a prescribed number of accumulation points}

\maketitle

\begin{abstract}
For each vector $x\in \ell^{\infty}$, we can define the non-empty compact set $L_x$ of accumulation points of $x$. Given an infinite subset $A$ of $\mathbb{N}\backslash\{1\}$, we can therefore investigate under which conditions on $A$, the set $L(A):=\{x\in \ell^\infty: |L_x|\in A\}$ is lineable or even densely lineable. In particular, we show that if $L(A)$ is lineable then there exists $k\ge 1$ such that $A\cap (A-k)$ is infinite and that if $L(A)$ is densely lineable then $A\cap (A-1)$ is infinite. We end up by answering an open question on the existence of a closed non-separable subspace in which each non-zero vector has countably many accumulation points. 
\end{abstract}

\section{Introduction}

Let $\mathbb{N}=\{1,2,3,\dots\}$ and $\ell^{\infty}=\ell^{\infty}(\mathbb{N})$ the space of bounded real sequences endowed with $\|\cdot\|_{\infty}$. It is obvious that for any $x\in \ell^{\infty}$, the set $L_x$ of accumulation points of $x$ is a non-empty compact set, where $a$ is said to be an accumulation point of $x$ if there exists an increasing sequence $(n_k)_{k\ge 1}$ such that $x_{n_k}$ tends to $a$ when $k$ tends to infinity. It follows from the Cantor-Bendixson Theorem that the cardinality of $L_x$, denoted by $|L_x|$, has to belong to $\mathbb{N}\cup\{\omega\}\cup\{\mathfrak{c}\}$ where $\omega$ is the cardinality of $\mathbb{N}$ and $\mathfrak{c}$ the cardinality of $\mathbb{R}$. Given a subset $A$ of $\mathbb{N}\cup\{\omega\}\cup\{\mathfrak{c}\}$, one can thus wonder which properties are satisfied by the set
\[L(A)=\{x\in \ell^{\infty}: |L_x|\in A\}.\]

If we were working in the context of a Baire space, we could be interested in knowing when these sets are comeager or not. In the context of $\ell^{\infty}$, we will be more interested in the linear structure inside $L(A)\cup \{0\}$, i.e. by the lineability, the dense lineability and the spaceability of $L(A)$. Recall that if $X$ is a topological vector space and $Y$ is a subset of $X$, we say that $Y$ is lineable (resp. densely lineable) if $Y\cup \{0\}$ contains an infinite-dimensional subspace (resp. a dense infinite-dimensional subspace). In the same context, we say that $Y$ is spaceable if $Y\cup\{0\}$ contains a closed infinite-dimensional subspace. The terminology of lineability and spaceability has been introduced by Gurariy \cite{Gur1, Gur2} and has been the object of many studies \cite{Aron, Bernal, Bernal2, Car, LeonMontes2, LeonMontes, Men}.

In the case of the sets $L(A)$, the first study was led by the second author. Papathanasiou proved in \cite{Dim} that $\ell^{\infty}\backslash c_0$ is densely lineable and it follows from his proof that $L(\mathfrak{c})$ is in fact densely lineable. Leonetti, Russo and Somaglia~\cite{LRS} have then looked at the other subsets of $\mathbb{N}\cup\{\omega\}\cup\{\mathfrak{c}\}$ and got several interesting results on the structure of $L(A)$. They have proved that $L(\mathfrak{c})$ is also spaceable, that $L(\omega)$ is densely lineable and spaceable and that $L(\mathbb{N}\backslash\{1\})$ is densely lineable but not spaceable.
When $A$ is a subset of $\mathbb{N}\backslash\{1\}$, we can easily show that if $A$ is cofinite then $L(A)$ is lineable. On the other hand, if $A$ is a finite subset of $\mathbb{N}\backslash\{1\}$ then $L(A)$ is not lineable \cite[Theorem 4.4]{LRS}. If $A$ is an infinite subset of $\mathbb{N}\backslash\{1\}$ which is not cofinite, Leonetti, Russo and Somaglia proved that both behaviors can appear. For instance, $L(2\mathbb{N}+1)$ is lineable while $L(\{3^n:n\ge 1\})$ is not lineable. In fact, they have shown that if $a_0\ge 2$ and $a_{n+1}>2a_n$ then $L(\{a_n:n\ge 0\})$ is not lineable. These different results have led to different open questions (see \cite{LRS}).

\begin{question}
Is $L(2\mathbb{N})$ lineable? Is $L(\{n^2:n\ge 2\})$ lineable?
\end{question}

\begin{question}\label{Q2}
Is $L(2\mathbb{N}+1)$ densely lineable in $\ell^{\infty}$?
\end{question}

\begin{question}\label{Q3}
Does $L(\omega)$ or $L(\mathbb{N}\cup\{\omega\})$ contain a non-separable closed subspace? Does it contain an isometric copy of $\ell^{\infty}$?
\end{question}

Excepted the case of $2\mathbb{N}$, we answer all these questions in the following sections. In Section~\ref{lin}, we focus on the lineability and the dense lineability of $L(A)$ for some subset $A\subset \mathbb{N}\backslash\{1\}$. More precisely, we prove that if $A\subset \mathbb{N}\backslash\{1\}$ and $L(A)$ is lineable then there exists $k\ge 1$ such that 
\[|A\cap (A-k)|=\infty.\]
This allows in particular to deduce that $L(\{n^2:n\ge 2\}$ is not lineable. We then prove that  if $A\subset \mathbb{N}\backslash\{1\}$ and $L(A)$ is densely lineable in $\ell^{\infty}$ then
\[|A\cap (A-1)|=\infty.\]
This answers Question~\ref{Q2} negatively. Finally, we investigate the closed infinite-dimensional subspaces of $L(\omega)$ in Section~\ref{spa} and we show that although $L(\mathbb{N}\cup\{\omega\})$ does not contain an isometric copy of $\ell^{\infty}$, the set $L(\omega)$ (and thus also $L(\mathbb{N}\cup\{\omega\}$) contains a non-separable closed subspace, answering Question~\ref{Q3}.

\section{Lineability and dense lineability}\label{lin}

Leonetti, Russo and Somaglia~\cite{LRS} have already proved that $L(\mathbb{N}\backslash\{1\})$ and $L(\omega)$ are both densely lineable. They have also remarked that it follows from the results of Papathanasiou~\cite{Dim} that $L(\mathfrak{c})$ is densely lineable. For these reasons, we will restrict our study of lineability and dense lineablity to subsets $A$ of $\mathbb{N}\backslash\{1\}$.

Given two sequences $x,y\in \ell^{\infty}$ with a finite number of accumulation points bigger than $2$, we would like to better understand the possibilities of cardinality for the set $L_{\lambda x+\mu y}$ for any non-zero couple of real numbers $(\lambda, \mu)$. We will use in this section the notations introduced in \cite{LRS}. In particular, as mentioned in \cite{LRS}, we can always find a partition $S_1,\dots,S_n$ of $\mathbb{N}$ in infinite sets and distinct real numbers $\xi_1,\dots,\xi_n$ such that $x\sim_{c_0} \xi_1 1_{S_1}+\dots +\xi_n 1_{S_n}$, \emph{i.e.} $x-\xi_1 1_{S_1}+\dots +\xi_n 1_{S_n}\in c_0$ where $1_{S}=(z_n)_n$ with $z_n=1$ if $n\in S$ and $z_n=0$ otherwise. By considering a partition $T_1,\dots,T_k$ of $\mathbb{N}$ in infinite sets and distinct real numbers $\eta_1,\dots,\eta_k$ such that $y\sim_{c_0} \eta_1 1_{T_1}+\dots +\eta_k 1_{T_k}$, we can easily state that 
\[L_{\lambda x+\mu y}=\{\lambda \xi_i +\mu \eta_j: S_i\cap T_j \ \text{is infinite},\ 1\le i\le n,\ 1\le j\le k\}.\]
In view of this equality, we introduce the following notations : 
$$
\mathcal{E}_{x,y}=\{ (i,j)\in \{1,\dots ,n\}\times \{1,\dots ,k\}: S_i\cap T_j \,\, \text{is infinite}\}
$$
and
$$
\mathcal{P}_{x,y}=\{ (\xi_i,\eta_j): (i,j)\in \mathcal{E}_{x,y}\}.$$
The ideas of several proofs in this paper come from the study of the configuration of $\mathcal{P}_{x,y}$ in $\mathbb{R}^2$. For instance, two elements $(\xi_i,\eta_j)\in \mathcal{P}_{x,y}$ and $(\xi_{i'},\eta_{j'})\in \mathcal{P}_{x,y}$ will give the same accumulation points for $\lambda x+\mu y$ if and only if their images by $x^*(a,b)=\lambda a+\mu b$ coincide. Geometrically, this means that $(\xi_i,\eta_j)$ and $(\xi_{i'},\eta_{j'})$ are on the same line of equation $x^*(a,b)=c$ for some $c$. The cardinality of $L_{\lambda x+\mu y}$ can thus be interpreted as the minimal number of real numbers $c_1,\dots,c_m$ such that the parallel lines given by $x^*(a,b)=c_l$ cover $\mathcal{P}_{x,y}$. For instance, since it is always possible to find a line passing through the origin, non-parallel to any of the (finitely many) line segments with vertices from $\mathcal{P}_{x,y}$, we get the existence of a vector $z\in \text{span}\{x,y\}$ such that $|L_{z}|=|\mathcal{E}_{x,y}|$ and we remark that for any $z\in \text{span}\{x,y\}$, $|L_z|\le |\mathcal{E}_{x,y}|$. On the other hand, by considering a line parallel to at least one line segment joining two vertices from $\mathcal{P}_{x,y}$, we get a vector $z\in \text{span}\{x,y\}$ such that $|L_{z}|<|\mathcal{E}_{x,y}|$. We state these simple facts in the following lemma for further reference.

\begin{lemma}\label{obvious}
Let $x\in L(n)$ and $y\in L(k)$ for some $n,k\ge 2$. There exist $z_1,z_2\in \text{\emph{span}}\{x,y\}$ satisfying 
$$
|L_{z_1}|=|\mathcal{E}_{x,y}|\quad\text{and}\quad |L_{z_2}|<|\mathcal{E}_{x,y}|
$$
and 
\[|\mathcal{E}_{x,y}|=\max\{|L_z|:z\in \text{\emph{span}}\{x,y\}\}.\]
\end{lemma}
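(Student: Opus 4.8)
The plan is to translate everything into the geometry of $\mathcal{P}_{x,y}\subseteq\R^2$ already described before the statement. Writing $x^{*}_{\lambda,\mu}(a,b)=\lambda a+\mu b$, the key input (established in the paragraph preceding the lemma) is that for every $(\lambda,\mu)\neq(0,0)$ one has
\[L_{\lambda x+\mu y}=\{\lambda\xi_i+\mu\eta_j:(i,j)\in\mathcal{E}_{x,y}\}=x^{*}_{\lambda,\mu}(\mathcal{P}_{x,y}).\]
I will also use three elementary observations: $\mathcal{E}_{x,y}\subseteq\{1,\dots,n\}\times\{1,\dots,k\}$ is finite; the map $(i,j)\mapsto(\xi_i,\eta_j)$ is injective because the $\xi_i$ (resp.\ the $\eta_j$) are pairwise distinct, hence $|\mathcal{P}_{x,y}|=|\mathcal{E}_{x,y}|$; and the projection of $\mathcal{P}_{x,y}$ onto its first coordinate is exactly $L_x$ (take $\lambda=1$, $\mu=0$ above), which has $n\ge 2$ elements, so $|\mathcal{E}_{x,y}|\ge n\ge 2$.

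To produce $z_1$, I would pick a direction along which $x^{*}_{\lambda,\mu}$ is injective on $\mathcal{P}_{x,y}$. For each pair of distinct points $(\xi_i,\eta_j),(\xi_{i'},\eta_{j'})\in\mathcal{P}_{x,y}$, the set of $(\lambda,\mu)$ identifying them is $\{(\lambda,\mu):\lambda(\xi_i-\xi_{i'})+\mu(\eta_j-\eta_{j'})=0\}$, a line through the origin, i.e.\ a proper subspace of $\R^2$; since $\mathcal{P}_{x,y}$ is finite there are finitely many such lines, and a finite union of lines does not exhaust $\R^2$, so I can choose $(\lambda,\mu)$ outside all of them (this $(\lambda,\mu)$ is automatically nonzero, the origin lying on every such line). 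Then $z_1:=\lambda x+\mu y$ satisfies $|L_{z_1}|=|x^{*}_{\lambda,\mu}(\mathcal{P}_{x,y})|=|\mathcal{P}_{x,y}|=|\mathcal{E}_{x,y}|$. The displayed maximality statement then follows at once: $z_1$ gives ``$\ge$'', while for an arbitrary $z=\lambda x+\mu y\in\text{span}\{x,y\}$ either $(\lambda,\mu)=(0,0)$ and $|L_z|=1\le|\mathcal{E}_{x,y}|$, or $L_z=x^{*}_{\lambda,\mu}(\mathcal{P}_{x,y})$ is the image of a set of cardinality $|\mathcal{E}_{x,y}|$, giving $|L_z|\le|\mathcal{E}_{x,y}|$ and hence ``$\le$''.

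To produce $z_2$, I would go the opposite way and force a collision. Using $|\mathcal{P}_{x,y}|\ge 2$, fix two distinct points $(\xi_i,\eta_j)\neq(\xi_{i'},\eta_{j'})$ of $\mathcal{P}_{x,y}$ and set $(\lambda,\mu):=(\eta_{j'}-\eta_j,\ \xi_i-\xi_{i'})$, which is nonzero precisely because the two points differ. A one-line computation gives $\lambda\xi_i+\mu\eta_j=\xi_i\eta_{j'}-\xi_{i'}\eta_j=\lambda\xi_{i'}+\mu\eta_{j'}$, so $x^{*}_{\lambda,\mu}$ sends these two distinct points of $\mathcal{P}_{x,y}$ to the same real number; therefore $z_2:=\lambda x+\mu y$ satisfies $|L_{z_2}|=|x^{*}_{\lambda,\mu}(\mathcal{P}_{x,y})|\le|\mathcal{P}_{x,y}|-1<|\mathcal{E}_{x,y}|$.

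I do not expect any genuine obstacle here; the only points requiring a little care are the justification of the displayed formula $L_{\lambda x+\mu y}=x^{*}_{\lambda,\mu}(\mathcal{P}_{x,y})$ --- which is exactly the expansion $1_{S_i}=\sum_j 1_{S_i\cap T_j}$ together with the remark that the finitely many finite intersections $S_i\cap T_j$ contribute nothing to $L_{\lambda x+\mu y}$, both already carried out in the text before the lemma --- and checking each time that the chosen $(\lambda,\mu)$ is nonzero so that the formula applies. Note that the statement does not require $z_2\neq 0$; if one wanted that, it suffices to additionally avoid the single line $\{(\lambda,\mu):\lambda x+\mu y=0\}$ (a proper subspace, relevant only when $x,y$ are linearly dependent), which does not interfere with the collision argument.
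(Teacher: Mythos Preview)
Your proposal is correct and follows essentially the same geometric argument sketched in the paragraph preceding the lemma: choosing a direction not parallel to any segment through two points of $\mathcal{P}_{x,y}$ to get $z_1$, and a direction parallel to one such segment to get $z_2$. Your write-up simply makes these choices explicit and carefully verifies the side conditions ($|\mathcal{P}_{x,y}|=|\mathcal{E}_{x,y}|\ge 2$, nonvanishing of $(\lambda,\mu)$), which the paper leaves implicit.
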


As expected, if $A$ is a finite subset of $\mathbb{N}\backslash\{1\}$, $L(A)$ cannot be lineable but if $A$ is infinite, $L(A)$ can be lineable even if $A$ is not cofinite. It is for instance the case for $2\mathbb{N}+1$ (see \cite{LRS}). However, if we denote by $(a_n)$ the increasing enumeration of $A$ then it is proved in \cite{LRS} that if $a_0\ge 2$ and $a_{n+1}>2a_n$ then $L(A)$ cannot be lineable. Leonetti, Russo and Somaglia then asked if $L(A)$ is lineable when $A=\{n^2:n\ge 2\}$. We therefore generalize the above condition to be able to deal with the set $\{n^2:n\ge 2\}$. The proof of the below proposition will rely on the following idea : if the set $\mathcal{P}_{x,y}$ contains $n_2$ points spread over $n_1$ columns then the smallest slope appearing between two points of consecutive columns can appear at most $n_1-1$ times so that there exists $z\in \text{span}\{x,y\}$ satisfying $n_2-n_1<|L_z|<n_2$.

\begin{prop}\label{diff}
Let $A\subset \mathbb{N}\backslash\{1\}$.
If $L(A)\cup\{0\}$ contains a subspace $M$ of dimension $2$ and if we let 
\[n_1=\min\{|L_z|:z\in M\backslash\{0\}\}\in A \quad\text{and}\quad n_2=\max\{|L_z|:z\in M\}\in A,\]
then
\[ ]n_2-n_1,n_2[\cap A\ne \emptyset.\]
\end{prop}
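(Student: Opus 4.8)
The plan is to exploit, exactly as in the discussion preceding Lemma~\ref{obvious}, the planar geometry of the set $\mathcal P_{x,y}$ for a well chosen basis of $M$. First I would fix a basis $\{x,y\}$ of $M$ with $|L_x|=n_1$, write $x\sim_{c_0}\sum_{i=1}^{n_1}\xi_i 1_{S_i}$ with $\xi_1<\dots<\xi_{n_1}$ and $y\sim_{c_0}\sum_{j=1}^{k}\eta_j 1_{T_j}$, and record two facts. The ``columns'' of $\mathcal P_{x,y}$ are the $n_1$ vertical lines through $\xi_1,\dots,\xi_{n_1}$, and each of them meets $\mathcal P_{x,y}$ (because $S_i$ is infinite while the $T_j$ partition $\mathbb N$, so $S_i\cap T_j$ is infinite for some $j$); and by Lemma~\ref{obvious}, $|\mathcal P_{x,y}|=|\mathcal E_{x,y}|=\max\{|L_z|:z\in M\}=n_2$. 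Thus $\mathcal P_{x,y}$ is a set of $n_2$ points spread over $n_1$ non-empty columns, and it suffices to exhibit $z\in M\setminus\{0\}$ with $n_2-n_1<|L_z|<n_2$, since then $z\in L(A)$ and so $|L_z|\in\,]n_2-n_1,n_2[\,\cap A$.

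I would then let $s_0$ be the smallest slope occurring between two points of $\mathcal P_{x,y}$ lying in \emph{consecutive} columns $\xi_i,\xi_{i+1}$ (a minimum over a finite non-empty family, as all columns are non-empty), and put $z:=-s_0x+y\in M\setminus\{0\}$. By the displayed formula for $L_{\lambda x+\mu y}$ with $(\lambda,\mu)=(-s_0,1)$, $L_z=\{-s_0\xi_i+\eta_j:(i,j)\in\mathcal E_{x,y}\}$, so $|L_z|$ is the number of lines of slope $s_0$ needed to cover $\mathcal P_{x,y}$; writing $p_\ell$ for the number of points of $\mathcal P_{x,y}$ on such a line $\ell$, one has $|L_z|=n_2-\sum_\ell(p_\ell-1)$. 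Since $s_0$ is realised by at least one pair, some $p_\ell\ge 2$, whence $|L_z|<n_2$; everything reduces to proving $\sum_\ell(p_\ell-1)\le n_1-1$.

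For this I would establish two facts. \textbf{(i) No column is skipped.} If a line $\ell$ of slope $s_0$ has its leftmost point in column $\xi_a$ and its rightmost in column $\xi_b$, then $\ell$ meets $\mathcal P_{x,y}$ in every column $\xi_t$ with $a\le t\le b$. Indeed, choosing any point $(\xi_t,\zeta_t)\in\mathcal P_{x,y}$ in each intermediate column and letting $(\xi_a,\zeta_a),(\xi_b,\zeta_b)$ be the extreme points of $\ell$, minimality of $s_0$ yields $\zeta_{t+1}-\zeta_t\ge s_0(\xi_{t+1}-\xi_t)$ for every $a\le t\le b-1$; summing telescopes to $\zeta_b-\zeta_a\ge s_0(\xi_b-\xi_a)$, which is an equality because the extreme points lie on $\ell$, so each intermediate inequality is an equality and $(\xi_t,\zeta_t)\in\ell$ for all $t$. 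As a non-vertical line meets each column at most once, $\ell$ therefore occupies a block of consecutive columns with exactly one point per column, and so accounts for exactly $p_\ell-1$ pairs of slope-$s_0$ points lying in consecutive columns. \textbf{(ii) At most one such pair per adjacent pair of columns.} If two distinct pairs of points of $\mathcal P_{x,y}$ in columns $\xi_i,\xi_{i+1}$ both had slope $s_0$, then by equality of slopes neither column is shared by the two pairs, and ``crossing the strands'' (joining the higher point of column $\xi_i$ to the lower point of column $\xi_{i+1}$) produces a pair in these two columns of slope strictly less than $s_0$, a contradiction. By (i) and (ii), $\sum_\ell(p_\ell-1)$ equals the total number of slope-$s_0$ pairs lying in consecutive columns, hence is at most the number $n_1-1$ of adjacent column pairs; therefore $|L_z|=n_2-\sum_\ell(p_\ell-1)\ge n_2-n_1+1>n_2-n_1$, and we are done.

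The crux is fact (i). It is what replaces the trivial bound $\sum_\ell(p_\ell-1)\le n_2-1$ by the required $n_1-1$, and it is precisely here that $s_0$ must be taken as the minimum over \emph{consecutive} columns, so that each of the telescoped slope inequalities — in particular the two involving an endpoint of $\ell$ — is an instance of the minimality of $s_0$.
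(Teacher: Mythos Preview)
Your proof is correct and takes essentially the same approach as the paper: the same basis choice with $|L_x|=n_1$, the same slope (the paper's $C=\min_l\frac{b_{i_{l+1}}-b_{I_l}}{a_{l+1}-a_l}$ is exactly your $s_0$, the minimum consecutive-column slope), and the same conclusion $|L_z|=n_2-r$ with $1\le r\le n_1-1$. The only difference is packaging: the paper observes directly that with this slope the projected columns $A_l=\{-Ca_l+b_i:|S_l\cap T_i|=\infty\}$ satisfy $\max A_l\le\min A_{l+1}$, so collisions occur only at the $r$ boundaries where equality holds, whereas you derive the same bound on $r$ through your facts (i) and (ii).
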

\begin{proof}
Let $M$ be a 2-dimensional space in $L(A)\cup\{0\}$. Let
\[n_1=\min\{|L_z|:z\in M\backslash\{0\}\}\in A \quad\text{and}\quad n_2=\max\{|L_z|:z\in M\}\in A.\]
 We choose linearly independent vectors $x,y\in M$ such that $|L_x|=n_1$ and $|L_y|=n_2$. It follows from Lemma~\ref{obvious} that $n_2=|\mathcal{E}_{x,y}|$ and that $n_2>n_1$. We now want to show that there exists $z\in \text{span}\{x,y\}$ such that 
 $$
 |L_z|\in ]n_2-n_1,n_2[.
 $$
To this end, we write
$$
x\sim_{c_0}a_1 1_{S_1}+\cdots+a_{n_1 }1_{S_{n_1}}
$$ 
with $S_1,\dots,S_{n_1}$ a partition of $\mathbb{N}$ in infinite sets and $a_1<a_2<\cdots <a_{n_1}$, and 
$$
y\sim_{c_0}b_1 1_{T_1}+\cdots+b_{n_2} 1_{T_{n_2}}
$$ 
with $T_1,\dots,T_{n_2}$ a partition of $\mathbb{N}$ in infinite sets and  $b_1<b_2<\cdots <b_{n_2}$. For $1\le l\le n_1$, we consider $i_l$ to be the smallest integer in $[1,n_2]$ such that $|S_l\cap T_{i_l}|=\infty$, and  $I_l$ to be the biggest integer in $[1,n_2]$ such that $|S_l\cap T_{I_l}|=\infty$.
We look at 
\[\mathcal{C}=\{\frac{b_{i_{l+1}}-b_{I_l}}{a_{l+1}-a_{l}}: 1\le l\le n_1-1\}.\]
Let $C=\min{\mathcal{C}}$ and $l_1,\cdots, l_r$, $1\leq r<n_1$ be an enumeration of indices such that $$
C=\frac{b_{i_{l_s+1}}-b_{I_{l_s}}}{a_{l_s+1}-a_{l_s}},\quad 1\leq s\leq r.
$$
We show that $z=-Cx+y$ satisfies $|L_z|\in ]n_2-n_1,n_2[$. Indeed, if we let $A_l=\{-C a_l + b_i:|S_l\cap T_i|=\infty,\ 1\le i\le n_2\}$ then we have that
\[L_z=\bigcup_{1\le l\le n_1-1} A_l.\]
Since $b_1<b_2<\cdots<b_{n_2}$, we get that $\sum_{1\le l\le n_1-1} |A_l|=|\mathcal{E}_{x,y}|$. Moreover, by definition of $C$, we have
\[\max A_l=-C a_l + b_{I_l}\le -C a_{l+1} + b_{i_{l+1}}  = \min A_{l+1}.\]
Finally, we remark that the above inequality is an equality exactly when $l=l_s$ for some $1\le s\le r$ and thus
\[|\mathcal{E}_{x,y}|-r=\left|\bigcup_{1\le l\le n_1-1} A_l\right|= |L_z|.\]
Since $1\le r\le n_1-1$, we can conclude because $z\in M\backslash\{0\}$ and thus $|L_z|\in A$.
\end{proof}

The above proposition and the fact that every infinite-dimensional subspace $M$ in $L(\mathbb{N}\backslash\{1\})\cup\{0\}$ contains vectors with an arbitrarily big amount of accumulation points lead us to the following result.

\begin{theorem}
Let $A\subset \mathbb{N}\backslash\{1\}$. If $L(A)$ is lineable then there exists $k\ge 1$ such that 
\[|A\cap (A-k)|=\infty.\]
\end{theorem}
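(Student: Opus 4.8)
The plan is to reduce the statement to Proposition~\ref{diff} together with a pigeonhole argument. Assume $L(A)$ is lineable and fix an infinite-dimensional subspace $M\subseteq L(A)\cup\{0\}$. The first thing I would establish is that the accumulation-point counts appearing in $M$ are unbounded, i.e. $\sup\{|L_z|:z\in M\setminus\{0\}\}=\infty$. Indeed, if they were bounded by some $N$, then, since every nonzero $z\in M$ also satisfies $|L_z|\in A\subseteq\mathbb{N}\setminus\{1\}$, we would get $M\setminus\{0\}\subseteq L(\{2,\dots,N\})$; but then $M$ would be an infinite-dimensional subspace of $L(\{2,\dots,N\})\cup\{0\}$, contradicting the non-lineability of $L(B)$ for a finite $B\subseteq\mathbb{N}\setminus\{1\}$ (\cite[Theorem~4.4]{LRS}).

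Next I would set $n_1:=\min\{|L_z|:z\in M\setminus\{0\}\}$ (a well-defined integer $\ge 2$) and fix $x_0\in M$ with $|L_{x_0}|=n_1$. Using the unboundedness just proved, choose $z_j\in M$ with $|L_{z_j}|\to\infty$; for all large $j$ we have $|L_{z_j}|>n_1$, so $z_j$ is not a scalar multiple of $x_0$ and $M_j:=\mathrm{span}\{x_0,z_j\}$ is a $2$-dimensional subspace of $L(A)\cup\{0\}$. By Lemma~\ref{obvious}, the minimal count on $M_j$ equals $n_1$ (it is at most $|L_{x_0}|$ and at least the global minimum $n_1$) while the maximal count is $n_2^{(j)}:=|\mathcal{E}_{x_0,z_j}|$, and $n_2^{(j)}\ge|L_{z_j}|\to\infty$. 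Applying Proposition~\ref{diff} to $M_j$ then produces $a_j\in A$ with $n_2^{(j)}-n_1<a_j<n_2^{(j)}$, hence $k_j:=n_2^{(j)}-a_j\in\{1,\dots,n_1-1\}$, while $a_j\in A$ and $a_j+k_j=n_2^{(j)}\in A$.

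Finally, since the defects $k_j$ live in the fixed finite set $\{1,\dots,n_1-1\}$, some value $k$ is attained for infinitely many $j$ by the pigeonhole principle. For each such $j$ we have $a_j\in A\cap(A-k)$, and because $a_j=n_2^{(j)}-k$ with $n_2^{(j)}\to\infty$ these are infinitely many distinct integers, so $|A\cap(A-k)|=\infty$, which is exactly the conclusion.

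I expect the only genuinely non-routine point to be the unboundedness of the counts in the first paragraph; everything after that is bookkeeping built on Proposition~\ref{diff} and the crude bound $|L_z|\le|\mathcal{E}_{x,y}|$ from Lemma~\ref{obvious}. A secondary technical care is to make sure that the $z_j$ can be taken linearly independent from $x_0$ and that $n_2^{(j)}=|\mathcal{E}_{x_0,z_j}|$ is itself an element of $A$ (it equals $|L_{z_1}|$ for some nonzero $z_1\in M_j\subseteq M$), both of which follow as soon as $|L_{z_j}|>n_1$.
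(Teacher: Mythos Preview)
Your proposal is correct and follows essentially the same route as the paper: fix a vector of minimal accumulation count in $M$, use \cite[Theorem~4.4]{LRS} to get vectors with arbitrarily large counts, apply Proposition~\ref{diff} to the resulting $2$-dimensional spans, and pigeonhole on the bounded defect $k_j=n_2^{(j)}-a_j$. Your write-up is in fact slightly more explicit than the paper's (you spell out why $M_j$ is $2$-dimensional, why its minimum is $n_1$, and why $n_2^{(j)}\in A$), and your range $k_j\in\{1,\dots,n_1-1\}$ is the sharp one coming from the open interval in Proposition~\ref{diff}.
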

\begin{proof}
Let $M$ be an infinite-dimensional space in $L(A)\cup\{0\}$. We consider $x\in M$ such that $|L_x|=\min\{|L_z|:z\in M\}$ and we set $K=|L_x|$. There also exists a sequence $(y_n)$ in $M$ with $\lim_{n}|L_{y_n}|=\infty$ since $L(F)$ is not lineable when $F$ is a finite subset of $\mathbb{N}\backslash\{1\}$ (\cite[Theorem 4.4]{LRS}). By applying Proposition~\ref{diff} to $\text{span}\{x,y_n\}$, we deduce that \[]|\mathcal{E}_{x,y_n}|-K,|\mathcal{E}_{x,y_n}|[\cap A\ne \emptyset.\] In particular, there exist $1\le k\le K$ and an increasing sequence $(n_j)$ such that $|\mathcal{E}_{x,y_{n_j}}|-k\in A$ for any $j$.  Since $|\mathcal{E}_{x,y_{n_j}}|\ge |L_{y_{n_j}}|$ and $|\mathcal{E}_{x,y_{n_j}}|\in A$, we get that $|A\cap (A-k)|=\infty$.
\end{proof}

This theorem allows us to answer an open question posed in \cite{LRS} by asserting that $L(\{n^2:n\ge 2\})$ is not lineable. More generally, we get the following example:
\begin{example}\label{ex1}
If $(a_n)_{n\ge 0}$ is an increasing sequence with $a_0\ge 2$ and $\lim_n (a_{n+1}-a_{n})=\infty$ then $L(\{a_n:n\ge 0\})$ is not lineable.
\end{example}

In view of the previous example and the case of $2\mathbb{N}+1$, one can wonder if $L(\{a_n:n\ge 0\})$ can be lineable when $\limsup_n (a_{n+1}-a_{n})=\infty$ but $\liminf_n (a_{n+1}-a_{n})<\infty$. 
In order to get such an example, we will use the following lemma which is obtained by applying successively Lemma 2.2 from \cite{LRS}.
 
\begin{lemma} \label{estimate}
If $x_1, \dots ,x_k\in L(\mathbb{N})$ satisfy that $|L_{x_i}|=n_i$, $1\leq i\leq k$, and $z=\sum_{i=1}^ka_ix_i$ with $a_k\neq 0$, then 
$$
\frac{n_k}{n_1\dots n_{k-1}}\leq |L_z|\leq n_1\dots n_k.
$$
\end{lemma}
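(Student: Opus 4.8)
The plan is to prove both inequalities simultaneously by induction on $k$, at each step isolating the last summand $x_k$ and appealing to the two-vector estimate of \cite[Lemma 2.2]{LRS}. In the notation of this section that estimate asserts that if $u\in L(p)$, $v\in L(q)$ and $w=\lambda u+\mu v$ with $\lambda\neq 0$, then $\frac{p}{q}\le |L_w|\le pq$; it can also be read off directly from Lemma~\ref{obvious}, since $|L_w|\le |\mathcal E_{u,v}|\le pq$, while $|\mathcal E_{u,v}|\ge p$ (each piece of the decomposition of $u$ meets some piece of that of $v$ in an infinite set) and the map $(i,j)\mapsto \lambda\xi_i+\mu\eta_j$ is at most $q$-to-one on $\mathcal E_{u,v}$ when $\lambda\neq 0$, whence $|L_w|\ge |\mathcal E_{u,v}|/q\ge p/q$. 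For $k=1$ the statement is immediate because $a_1\neq 0$ gives $|L_z|=n_1$.

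For the inductive step I would first record the \emph{unconditional} upper bound $|L_v|\le n_1\cdots n_j$ valid for every $v\in\operatorname{span}\{x_1,\dots,x_j\}$ with \emph{no} restriction on the coefficients: if $v=0$ it is trivial, and otherwise one peels off $x_j$ and combines the two-vector upper bound with the induction hypothesis applied to the remaining part (the subcases in which the coefficient of $x_j$ vanishes, or the remaining part is $0$, being immediate). Writing now $z=w+a_kx_k$ with $w:=\sum_{i=1}^{k-1}a_ix_i$, this gives $|L_w|\le n_1\cdots n_{k-1}<\infty$, so $w\in L(\mathbb N)$. If $w=0$ then $z=a_kx_k$, hence $|L_z|=n_k$, which lies between $\frac{n_k}{n_1\cdots n_{k-1}}$ and $n_1\cdots n_k$ since each $n_i\ge1$. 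If $w\neq 0$, I would apply the two-vector estimate to $u=x_k\in L(n_k)$ and $v=w\in L(|L_w|)$, with $z=a_kx_k+w$ and the coefficient $a_k$ of $x_k$ nonzero; this gives
\[
\frac{n_k}{|L_w|}\ \le\ |L_z|\ \le\ n_k\,|L_w|,
\]
and substituting $1\le |L_w|\le n_1\cdots n_{k-1}$ yields exactly $\frac{n_k}{n_1\cdots n_{k-1}}\le |L_z|\le n_1\cdots n_k$.

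The main point to be careful about is that, once $x_k$ has been peeled off, the remaining combination $w$ carries no control on its ``leading'' coefficient and may even be the zero vector, so the induction hypothesis (which assumes the last coefficient nonzero) is not available for $w$ as stated. This is exactly why the upper bound must be established first in its coefficient-free form, on all of $\operatorname{span}\{x_1,\dots,x_{k-1}\}$ including $0$, and why the case $w=0$ is handled separately; after that the induction proceeds with no further difficulty.
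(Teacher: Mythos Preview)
Your proposal is correct and follows essentially the same approach as the paper, which simply states that the lemma ``is obtained by applying successively Lemma 2.2 from \cite{LRS}.'' Your write-up supplies the details of that successive application---in particular the careful handling of the degenerate cases $w=0$ and vanishing leading coefficient that make the induction go through---so the two arguments are the same in substance.
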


We now show that there exists a set $A$ with arbitrarily big gaps such that $L(A)$ is lineable.

\begin{prop}\label{nk}
Let $(n_k)_{k\ge 1}$ be an increasing sequence. There exists an infinite set $\mathcal{K}$ such that the set 
$$
L\left([2,+\infty[\backslash\bigcup_{k\in\mathcal{K}}[n_k,n_{k+1}[\right)
$$ 
is lineable.
\end{prop}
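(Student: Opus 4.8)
The plan is to build a single infinite-dimensional subspace of $L(A) \cup \{0\}$ (for a suitable $A$ of the prescribed form) out of a sequence of "block" vectors with finitely many accumulation points, chosen so tightly that every nonzero combination has a controllable, and crucially small, number of accumulation points. The model to keep in mind is the case $2\mathbb{N}+1$ from \cite{LRS}: there one uses vectors whose accumulation-point multisets are arithmetic progressions so that linear combinations again land in the "odd" set. Here the obstruction is that we must avoid the bands $[n_k,n_{k+1})$ for $k \in \mathcal{K}$, so we have freedom only to place $\mathcal{K}$, not the bands themselves. The idea is to make the cardinalities $|L_z|$ grow very slowly, doubling or multiplying by a bounded factor at each "level" of the construction, and then to choose $\mathcal{K}$ so that the (few, exponentially sparse) values actually attained by $|L_z|$ dodge the bands — i.e. $\mathcal{K}$ indexes exactly those bands that happen to be clear of the attainable cardinalities.

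Concretely, I would proceed as follows. First, fix a sequence of vectors $x_1, x_2, \dots \in \ell^\infty$ with $|L_{x_i}| = 2$ for every $i$, chosen on disjoint-in-a-suitable-sense supports (e.g. via a scheme of nested partitions of $\mathbb{N}$) so that Lemma~\ref{estimate} applies and, moreover, the lower bound there is attained or nearly attained: a combination $z = \sum_{i=1}^k a_i x_i$ with $a_k \neq 0$ should satisfy $|L_z| \le 2^k$, and by arranging the $\xi_i$-values (the two accumulation points of each $x_i$) to be generic/arithmetically aligned, one also forces $|L_z|$ to be at least roughly $k$, so $|L_z|$ lies in a narrow window depending only on $k = \max\{i : a_i \neq 0\}$. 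Second, I would identify the finite set $V_k$ of cardinalities attainable by combinations whose top index is $k$; since $V_k \subseteq [c_1 k, 2^k]$ and these windows are exponentially spread out, the set $V := \bigcup_k V_k$ is very sparse. Third, choose $\mathcal{K}$ to be the set of all $k$ such that $[n_k, n_{k+1}) \cap V = \emptyset$ — equivalently, throw away from $[2,\infty)$ every band that misses $V$. Then by construction $V \subseteq [2,\infty) \setminus \bigcup_{k \in \mathcal{K}} [n_k, n_{k+1})$, so $\operatorname{span}\{x_i : i \ge 1\} \subseteq L(A) \cup \{0\}$ with $A$ as in the statement, giving lineability. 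One must also check $\mathcal{K}$ is infinite: since $V$ is exponentially sparse while the bands $[n_k, n_{k+1})$, although possibly long, still only cover a limited "density" near infinity as $k$ ranges, infinitely many bands are disjoint from $V$ — this should follow from a counting/pigeonhole argument once the growth rates are pinned down, possibly after passing to a subsequence of $(n_k)$ or refining which $x_i$ we keep.

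The main obstacle I anticipate is the second step: getting a genuine \emph{lower} bound on $|L_z|$ that is uniform over all combinations with a fixed top index $k$ (Lemma~\ref{estimate} only gives $|L_z| \ge n_k/(n_1 \cdots n_{k-1})$, which is useless here since the denominator blows up). The point of controlling $|L_z|$ from below is to ensure $V_k$ does not collapse down into small integers that might land inside every band; but in fact for the argument we only need an \emph{upper} bound on $|L_z|$ plus the fact that $|L_z| \ge 2$ always — what we really need is that $V$ is sparse enough that infinitely many bands avoid it, and this is driven entirely by the upper bound $|L_z| \le 2^k$ together with finiteness of the number of "configurations" at each level. So the real work is a careful bookkeeping argument: bound $|V_k|$ (it is finite because $\mathcal{P}_{x,y}$-type configurations are finite, or more simply because $V_k \subseteq [2, 2^k] \cap \mathbb{N}$), then show $\sum_k \mathbf{1}[\,[n_k,n_{k+1}) \cap V \ne \emptyset\,]$ leaves infinitely many $k$ with empty intersection. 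Since each element of $V$ can "block" at most one band, the number of blocked bands among the first $N$ is at most $|V \cap [2, n_{N+1})|$, and because $V$ meets $[2, 2^k]$ in at most $\sum_{j \le k} |V_j|$ points while $n_{N+1}$ can be made to grow faster than any prescribed function of the $|V_j|$'s — here I would, if necessary, \emph{thin} the generating sequence $(x_i)$ (keep only $x_{i_1}, x_{i_2}, \dots$) to make the attainable cardinalities as sparse as we like relative to $(n_k)$ — we conclude infinitely many bands are unblocked, so $\mathcal{K}$ is infinite and we are done.
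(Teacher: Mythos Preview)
Your approach has a genuine gap, and it lies exactly where you flag your own concern and then talk yourself out of it. With $|L_{x_i}|=2$ for every $i$, the set $V$ of attainable cardinalities is \emph{not} sparse: it is all of $\{2,3,4,\dots\}$. To see this, take for instance the nested-partition model you suggest, so that the accumulation points of $z=\sum_{i=1}^k a_i x_i$ are the subset sums $\{\sum_{i\in S} a_i : S\subseteq\{1,\dots,k\}\}$. Setting $a_1=\cdots=a_{k-1}=0$ and $a_k\ne 0$ gives $|L_z|=2$, so $2\in V_k$ for every $k$; taking all $a_i$ equal gives $|L_z|=k+1$; and intermediate choices fill in the rest. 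Thus $V_k\supseteq\{2,\dots,k+1\}$ and $V=\mathbb{N}\setminus\{1\}$. Every band $[n_k,n_{k+1})$ then meets $V$, so your $\mathcal{K}$ is empty. Thinning the sequence $(x_i)$ cannot help: a subsequence still consists of vectors with two accumulation points each, and the same computation applies verbatim. Your counting argument ``each element of $V$ blocks at most one band'' is correct but vacuous once $V$ is all of $\{2,3,\dots\}$, since then the number of blocked bands among the first $N$ is bounded by $n_{N+1}-n_1\ge N$.

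The missing idea is precisely the one you dismiss: you \emph{do} need a nontrivial lower bound on $|L_z|$, and the way to get it is to abandon $|L_{x_i}|=2$ and instead let $|L_{x_r}|=l_r$ grow extremely fast. The paper builds $(l_r)$ and $(k_r)$ together by induction so that $l_r/(l_1\cdots l_{r-1})\ge n_{k_{r-1}+1}$ and $n_{k_r}>l_1\cdots l_r$; then Lemma~\ref{estimate} sandwiches $|L_z|$ (for $z$ with top index $j$) inside $[n_{k_{j-1}+1},\,n_{k_j})$, which is exactly the complement of the removed bands. In other words, the lower bound in Lemma~\ref{estimate} is not ``useless here'' --- it is the whole point, provided one chooses $l_r$ large enough to make the quotient $l_r/(l_1\cdots l_{r-1})$ dominate $n_{k_{r-1}+1}$. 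The construction of $\mathcal{K}$ is then interleaved with the construction of the $x_r$, rather than chosen a posteriori from a fixed $V$.
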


\begin{proof}
We seek to construct by induction a linearly independent family $(x_n)_{n\ge 1}\subset \ell^{\infty}$ and an increasing sequence of integers $(k_j)_{j\ge 1}$ such that 
\[\text{span}\{x_n: n\in \mathbb{N}\}\subset L\left([2,+\infty[\backslash\bigcup_{k\in\mathcal{K}}[n_k,n_{k+1}[\right)\cup \{0\}\] for $\mathcal{K}=\{k_j:j\ge 1\}$.

Let $x_1\in \ell^{\infty}$ be a sequence with $|L_{x_1}|:=l_1=2$ and $k_1$ be such that $n_{k_1}>l_1$. We choose $l_2$ such that $\frac{l_2}{l_1}\geq n_{k_1+1}$ and then a sequence $x_2\in \ell^{\infty}$ with $|L_{x_2}|=l_2$ and $k_2$ such that $n_{k_2}>l_1 l_2$. Inductively, if $r\geq 1$, we choose $l_r$ such that
$$
\frac{l_r}{l_1\dots l_{r-1}}\geq n_{k_{r-1}+1}
$$
and then a sequence $x_r\in \ell^{\infty}$ such that $|L_{x_r}|=l_r$ and $k_r$ such that
$$
n_{k_r}>l_1\dots l_r.
$$
Then $M:=\text{span}\{x_n: n\in \mathbb{N}\}$ is an infinite-dimensional subspace contained in 
$$
L\left([2,+\infty[\backslash\bigcup_{k\in\mathcal{K}}[n_k,n_{k+1}[\right)\cup \{0\}
$$ 
for $\mathcal{K}:=\{k_j: j\in \mathbb{N}\}$. Indeed, if $z=\sum_{i=1}^ja_ix_i \in M$ with $a_j\neq 0$, then by Lemma~\ref{estimate} and the induction process, we get that
$$
n_{k_{j-1}+1}\leq \frac{l_j}{l_1\dots l_{j-1}}\leq |L_z|\leq l_1\dots l_j<n_{k_j}.
$$

\end{proof}

In particular, if we apply the previous proposition with a sequence $(n_k)$ satisfying $\lim_k(n_{k+1}-n_k)=\infty$, we deduce that there exists an increasing sequence $(a_n)$ with $a_0\ge 2$ and $\limsup_n (a_{n+1}-a_{n})=\infty$ such that $L(\{a_n:n\ge 0\})$ is lineable.\\

Another question posed on \cite{LRS} concerns the dense lineability of $L(2\mathbb{N}+1)$. Indeed, they prove there that $L(2\mathbb{N}+1)$ is lineable and that $L(\mathbb{N}\backslash\{1\})$ is densely lineable. It is thus natural to try to determine if $L(2\mathbb{N}+1)$ can also be densely lineable. The answer turns out to be negative as infinite sets $A$ such that $L(A)$ is densely lineable have to satisfy the stronger assumption that $|A\cap (A-1)|=\infty$. The idea of this result can be summarized as follows. If $y$ is a non-zero sequence whose accumulation points are separated by $\delta>0$ and if $x$ is a sequence really close to $1_{2\mathbb{N}}-1_{2\mathbb{N}+1}$ then the set $\mathcal{P}_{x,y}$ will belong to two thin rectangles $\{(\xi,\eta):|\xi-1|<\varepsilon\}$ and $\{(\xi,\eta):|\xi+1|<\varepsilon\}$ around $(1,0)$ and $(-1,0)$. If $\varepsilon$ is sufficiently small compared to $\delta$ and $\|y\|^{-1}$ then the slopes between two points of the same rectangle will have a modulus equals to $0$ or farther from $0$ than the slopes between two points of the two rectangles. It follows that the biggest slope (or the smallest if the biggest is equal to $0$) existing between the two rectangles will only appear once, implying that there exists $z\in \text{span}\{x,y\}$ such that $|L_z|=|\mathcal{E}_{x,y}|-1$.

\begin{theorem}\label{A-1}
Let $A\subset \mathbb{N}\backslash\{1\}$. If $L(A)$ is densely lineable then
\[|A\cap (A-1)|=\infty.\]
\end{theorem}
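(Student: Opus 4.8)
The plan is to read off the conclusion from the configuration of $\mathcal P_{x,y}$ when $x$ is taken very close to $v:=1_{2\mathbb N}-1_{2\mathbb N+1}$. Suppose $L(A)$ is densely lineable and let $M\subseteq L(A)\cup\{0\}$ be a dense infinite-dimensional subspace. Since $L(F)$ is not lineable for finite $F\subseteq\mathbb N\setminus\{1\}$ (\cite[Theorem~4.4]{LRS}) while $M$ is infinite-dimensional, $M$ contains vectors with arbitrarily many accumulation points; fix $y\in M$ with $k:=|L_y|$ as large as we wish (in particular $k\ge 3$), write $y\sim_{c_0}b_1 1_{T_1}+\cdots+b_k 1_{T_k}$ with $b_1<\cdots<b_k$, and set $\delta:=\min_j(b_{j+1}-b_j)>0$ and $\Delta:=b_k-b_1$. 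By density, choose $x\in M$ with $\|x-v\|_\infty<\varepsilon$, where $\varepsilon>0$ will be fixed small in terms of $\delta$ and $\Delta$. Then every accumulation point of $x$ lies in $[1-\varepsilon,1+\varepsilon]\cup[-1-\varepsilon,-1+\varepsilon]$, and, looking at $x$ along the infinite sets $2\mathbb N$ and $2\mathbb N+1$ on which $v$ is constant, $x$ has accumulation points in each of these two intervals. Hence, writing $x\sim_{c_0}a_1 1_{S_1}+\cdots+a_m 1_{S_m}$ with the $a_i$ pairwise distinct, the $a_i$ split into a non-empty ``right'' family inside $[1-\varepsilon,1+\varepsilon]$ and a non-empty ``left'' family inside $[-1-\varepsilon,-1+\varepsilon]$. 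Set $N:=|\mathcal E_{x,y}|=|\mathcal P_{x,y}|$; by Lemma~\ref{obvious}, $N\in A$, and $N\ge k\ge 3$.

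The heart of the plan is to produce $z\in\text{span}\{x,y\}\setminus\{0\}$ with $|L_z|=N-1$; then $z\in M\setminus\{0\}$ (note $z\ne 0$ because $|L_z|=N-1\ge 2$) forces $N-1\in A$, hence $N-1\in A\cap(A-1)$, and letting $k\to\infty$ gives $|A\cap(A-1)|=\infty$. I would view $\mathcal P_{x,y}=\{(a_i,b_j):(i,j)\in\mathcal E_{x,y}\}$ as a planar point set lying in the two thin vertical strips $\{|a-1|\le\varepsilon\}$ and $\{|a+1|\le\varepsilon\}$; for $s\in\mathbb R$, the vector $z=-sx+y$ has $|L_z|$ equal to the number of distinct lines of slope $s$ meeting $\mathcal P_{x,y}$, so if exactly one pair of points of $\mathcal P_{x,y}$ lies on a common line of slope $s$ (which then forbids any three of them doing so), then $|L_z|=N-1$. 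To find such a slope $s_0$ I would first note: if $\varepsilon<\delta/(\delta+\Delta)$, any segment joining two points inside the same strip has slope $0$ or of modulus $\ge\delta/(2\varepsilon)$ (the first coordinates differ by at most $2\varepsilon$ and the second by $0$ or by at least $\delta$), whereas any segment joining a point of the right strip to a point of the left strip has slope of modulus $\le\Delta/(2-2\varepsilon)<\delta/(2\varepsilon)$; hence a nonzero slope realised ``between the strips'' is realised by no segment ``inside a strip''. Then I would let $s_0$ be the largest between-strip slope when this maximum is positive (and the smallest one otherwise). Shrinking $\varepsilon$ a little more (say $\varepsilon<\delta/(2\Delta)$), one checks that such an extremal slope can be attained only by segments joining a point of maximal second coordinate in one strip to a point of minimal second coordinate in the other, and among these only by the unique one whose endpoints have extremal first coordinates — here the distinctness of the $a_i$ is what makes it unique. (The degenerate possibility that every between-strip segment is horizontal would force all of $\mathcal P_{x,y}$ onto a single height, i.e. $|L_y|=1$, which is excluded.) So $s_0\ne 0$ is realised by exactly one pair, and $z=-s_0x+y$ is the desired vector.

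Assembling these steps yields the theorem. The hard part will be the second geometric step above: verifying that for $\varepsilon$ small the extremal between-strip slope is attained by a single pair of points of $\mathcal P_{x,y}$. This amounts to a careful comparison between the gaps of the accumulation values of $y$ (bounded below by $\delta$) and the horizontal widths of the two strips (equal to $2\varepsilon$, which we get to make tiny), together with some attention to the sign of the extremal slope and to a handful of degenerate configurations; beyond this, everything reduces to bookkeeping with the correspondence between $\text{span}\{x,y\}$ and coverings of $\mathcal P_{x,y}$ by parallel lines set up just before Lemma~\ref{obvious}.
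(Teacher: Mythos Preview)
Your plan is correct and follows essentially the same route as the paper: approximate $1_{2\mathbb N}-1_{2\mathbb N+1}$ by some $x\in M$, observe that $\mathcal P_{x,y}$ sits in two thin vertical strips, separate the within-strip slopes from the between-strip slopes, and then pick an extremal between-strip slope $s_0$ so that $z=-s_0x+y$ has $|L_z|=|\mathcal E_{x,y}|-1$. The only noteworthy difference is in how uniqueness of the extremal slope is argued: you bound numerators and denominators directly (requiring $\varepsilon<\delta/(2\Delta)$ so that the maximal numerator $D_{\max}$ dominates all others), whereas the paper takes $C=\max$ of the between-strip slopes and shows, by a short value-comparison (if some other pair had slope $C$ then the common value $-Cb_l+a_i$ would have to coincide with $-Cb_{l_0}+a_{i_0}$, which the within-strip estimates forbid), that only one pair realises $C$. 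Both arguments are valid; the paper's avoids having to identify \emph{which} pair achieves the extremum and so needs slightly less bookkeeping, but your version makes the geometry more explicit. You also spell out the final step (letting $|L_y|\to\infty$ to get infinitely many elements of $A\cap(A-1)$), which the paper leaves implicit.
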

\begin{proof}
Let $M$ be a dense, infinite-dimensional vector space in $L(A)\cup\{0\}$ and let $y\in M$ be a non-zero sequence. We can write
$$
y\sim_{c_0} a_1 1_{S_1}+\cdots+a_N 1_{S_N}
$$ 
where $S_1,\dots, S_N$ is a partition of $\mathbb{N}$ in infinite sets and $a_1<a_2<\cdots <a_N$. We consider 
$$
\delta:=\min\{a_{l+1}-a_l:1\le l<N\}.
$$ 
Since $M$ is dense, there exists $x\in M$ such that 
$$
\|x-(1_{2\mathbb{N}}-1_{2\mathbb{N}+1})\|<\varepsilon=\min\left\{\frac{\delta}{8\|y\|},\frac{1}{2}\right\}.
$$
We can then write 
$$
x\sim_{c_0}(b_1 1_{T_1}+\cdots+b_L 1_{T_L})+ (c_1 1_{Q_1}+\cdots+c_J 1_{Q_J})
$$ 
with \[-1-\varepsilon\le b_1<...<b_L\le -1+ \varepsilon <1-\varepsilon\le  c_1<...<c_J\le 1+\varepsilon.\]
We let 
\[C:=\max\{\frac{a_k-a_i}{c_j-b_l}~:~|T_l\cap S_i|=\infty,\ |Q_j\cap S_k|=\infty, 1\le j\le J,\ 1\le l\le L,\ 1\le k,i\le N\}.\]
Note that if $C=0$, we can replace $y$ by $-y$ to get $C\ne 0$ (since the elements in $\mathcal{P}_{x,y}$ cannot be all aligned because $1\notin A$). We can thus assume without loss of generality that we have $C\ne 0$ and we remark that
\[|C|\le \frac{\|y\|}{1-\varepsilon}\le 2\|y\|.\] 
We now consider 
$$
z=-Cx+y
$$ 
and we prove that $|L_z|=|\mathcal{E}_{x,y}|-1$.

Let 
$$
A_l=\{-C b_l + a_i:|T_l\cap S_i|=\infty,\ 1\le i\le N\}
$$ for $1\le l\le L$
and 
$$
B_j=\{-C c_j + a_k:|Q_j\cap S_k|=\infty, \ 1\le i\le N\}
$$
for $1\le j\le J$.
We know that we have 
\[L_z=\bigcup_{1\le l\le L} A_l \cup \bigcup_{1\le j\le J} B_j\quad \text{and}\quad \sum_{1\le l\le L}|A_l|+\sum_{1\le j\le J}|B_j|=|\mathcal{E}_{x,y}|.\]
Pick $i_0,j_0,k_0,l_0$ such that $|T_{l_0}\cap S_{i_0}|=\infty$, $|Q_{j_0}\cap S_{k_0}|=\infty$ and
$$
C=\frac{a_{k_0}-a_{i_0}}{c_{j_0}-b_{l_0}}.
$$
We show that among all the elements in the sets $A_l$ and the sets $B_j$, there is only two equal elements $-C b_{l_0}+ a_{i_0}$ and $-Cc_{j_0}+a_{k_0}$. It will therefore follow that $|L_z|=|\mathcal{E}_{x,y}|-1$.

We have three cases to consider:
\begin{enumerate}
\item Let $1\le l<l'\le L$ and $1\le i,i'\le N$. If $-C b_l + a_i=-C b_{l'}+a_{i'}$ then
$C=\frac{a_{i'}-a_i}{b_{l'}-b_l}$. However, we know that $C\ne 0$ and thus $i'\ne i$. It will then follow by definition of $\delta$ and $\varepsilon$ that 
\[|C|\ge \frac{\delta}{2\varepsilon}\ge 4\|y\|.\]
This is a contradiction since $|C|\le 2\|y\|$.
\item  Let $1\le j<j'\le J$ and $1\le k,k'\le N$. If $-C c_j + a_k=-C c_{j'}+a_{k'}$ then as previously, we get
\[|C|=\frac{|a_{k'}-a_k|}{|c_{j'}-c_j|}\ge \frac{\delta}{2\varepsilon}\ge 4\|y\|,\]
which is a contradiction.
\item Let $1\le j\le J$, $1\le l\le L$ and $1\le k,k'\le N$. If $-C c_j + a_k=-C b_{l}+a_{i}$ with $-C c_j + a_k\in B_j$ and $-C b_{l}+a_{i}\in A_l$ then
$$
-Cc_{j_0}+a_{k_0}=-Cb_{l_0}+a_{i_0}= -Cb_{l}+a_{i}=-Cc_{j}+a_{k}.
$$ 
Indeed, if we had for instance
$$
-Cc_{j_0}+a_{k_0}=-Cb_{l_0}+a_{i_0}> -Cb_{l}+a_{i}=-Cc_{j}+a_{k}
$$
 then 
\[\frac{a_{k_0}-a_{i}}{c_{j_0}-b_{l}}> C\]
which is a contradiction. 
It follows that $a_{i_0}-Cb_{l_0}= a_{i}-Cb_{l}$ and $a_{k_0}-Cc_{j_0}=a_{k}-Cc_{j}$. By the previous points, this implies that $l=l_0$ and $j=j_0$ (and thus also that $i=i_0$ and $k=k_0$).
\end{enumerate}
We conclude that $|L_z|=|\mathcal{E}_{x,y}|-1$ and the proof is complete since $|\mathcal{E}_{x,y}|-1\in A\cap (A-1)$.
\end{proof}

As mentioned, it directly follows from this result that $L(2\mathbb{N}+1)$ is not densely lineable. We can also appraise the difference between the lineability and the dense lineabilty through the comparison between Proposition~\ref{nk} and the following result.

\begin{prop}\label{nk2}
If $(n_k)_{k\ge 1}$ satisfies $n_{k+1}>n^2_k$ for any $k\ge 1$ then for any infinite set $\mathcal{K}$, the set 
$$
L\left([2,+\infty[\backslash\bigcup_{k\in\mathcal{K}}[n_k,n_{k+1}[\right)
$$ 
is not densely lineable.
\end{prop}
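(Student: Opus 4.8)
The plan is to combine the necessary condition from Theorem~\ref{A-1} with the arithmetic structure of the set $B=[2,+\infty[\backslash\bigcup_{k\in\mathcal K}[n_k,n_{k+1}[$. Suppose for contradiction that $L(B)$ is densely lineable. By Theorem~\ref{A-1}, this forces $|B\cap(B-1)|=\infty$, i.e.\ there are infinitely many $m\ge 2$ with both $m\in B$ and $m+1\in B$. So it suffices to show that under the hypothesis $n_{k+1}>n_k^2$, the set $B$ contains only finitely many such \emph{consecutive pairs} $\{m,m+1\}$.

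The point is that $B$ is obtained from $[2,+\infty[$ by deleting the blocks $[n_k,n_{k+1}[$ for $k\in\mathcal K$. Writing $\mathcal K=\{k_1<k_2<\cdots\}$, the complement $[2,+\infty[\backslash B$ is the union of the intervals $[n_{k_j},n_{k_j+1}[$. Since the sequence $(n_k)$ is increasing and $n_{k+1}>n_k^2\ge 2n_k$, these deleted blocks grow very fast and, more importantly, the surviving blocks of $B$ between two consecutive deleted intervals become short relative to their position. Concretely, between $n_{k_j+1}$ (first surviving integer after the $j$-th hole) and $n_{k_{j+1}}$ (first deleted integer of the next hole), the surviving integers are exactly those in $[n_{k_j+1},n_{k_{j+1}}[$, and the total number of surviving integers up to any point $N$ is bounded: for $N$ in such a surviving window one has $N<n_{k_{j+1}}\le n_{k_{j+1}-1}^2\le\cdots$, which I would unwind to show $N$ grows at least doubly exponentially in the number of surviving windows passed. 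In particular, a consecutive pair $\{m,m+1\}\subset B$ forces both $m$ and $m+1$ to lie strictly inside the same surviving window $[n_{k_j+1},n_{k_{j+1}}[$; so the number of consecutive pairs in $B$ is at most $\sum_j(\#\text{pairs in }[n_{k_j+1},n_{k_{j+1}}[)$, and I want this sum to be finite — which it is \emph{not} in general (each window can be long).

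So the naive cardinality count fails, and the real argument must instead apply Theorem~\ref{A-1} more carefully: denseness gives, for \emph{every} nonzero $y$ in the dense subspace, a companion $x$ close to $1_{2\N}-1_{2\N+1}$ producing $z\in\mathrm{span}\{x,y\}$ with $|L_z|=|\mathcal E_{x,y}|-1$, and $|\mathcal E_{x,y}|\ge |L_y|$. The key extra input (the reason $n_{k+1}>n_k^2$ appears, mirroring Lemma~\ref{estimate}) is that in an infinite-dimensional subspace $M\subset L(B)\cup\{0\}$ the values $|L_z|$ must, by Lemma~\ref{estimate} applied to partial sums $z=\sum_{i\le j}a_ix_i$, fill out intervals of the form $[l_j/(l_1\cdots l_{j-1}),\,l_1\cdots l_j]$; once $l_j/(l_1\cdots l_{j-1})$ and $l_1\cdots l_j$ straddle a hole $[n_k,n_{k+1}[$ with $n_{k+1}>n_k^2$, the interval $[l_j/(l_1\cdots l_{j-1}),l_1\cdots l_j]$ cannot avoid $[n_k,n_{k+1}[$ because its ratio of endpoints, $(l_1\cdots l_j)^2/l_j=l_1\cdots l_{j-1}\cdot(l_1\cdots l_j)$, exceeds $n_{k+1}/n_k$ exactly when the length condition $n_{k+1}>n_k^2$ holds — so $|L_z|$ would land in a forbidden block. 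This is the step I expect to be the main obstacle: making precise that an infinite-dimensional subspace forces these nested product-intervals and that $n_{k+1}>n_k^2$ is exactly the threshold preventing such an interval from fitting in a single surviving window once the window is far enough out.

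I would therefore organize the proof as: (1) assume $L(B)$ densely lineable with witnessing dense infinite-dimensional $M$; (2) pick a basis $(x_i)$ of a countable dense subspace and, using Lemma~\ref{estimate}, track the quantities $l_j=|L_{x_j}|$ and the product-interval $[l_j/(l_1\cdots l_{j-1}),\,l_1\cdots l_j]$ of achievable cardinalities inside $M$; (3) observe that since $\mathcal K$ is infinite, infinitely many holes $[n_k,n_{k+1}[$ lie to the right of any fixed point, and for $j$ large the product-interval is wide enough (its endpoints have ratio $>n_{k+1}/n_k$) that it must intersect some hole, contradicting $M\subset L(B)\cup\{0\}$. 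The density hypothesis enters to guarantee $M$ meets vectors with arbitrarily many accumulation points arranged so that Lemma~\ref{estimate}'s lower bound $l_j/(l_1\cdots l_{j-1})$ genuinely grows, and Theorem~\ref{A-1} is invoked as the cleaner route if one prefers: it already rules out $B$ with $|B\cap(B-1)|<\infty$, so it remains only to exhibit \emph{one} consecutive pair-sparsity consequence of $n_{k+1}>n_k^2$ strong enough to contradict it — which the product-interval argument supplies.
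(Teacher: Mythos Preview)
Your proposal is not a proof: it is a sequence of heuristics, and the central step is missing.

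You correctly notice that the Theorem~\ref{A-1} route fails outright: the surviving windows $[n_{k_j+1},n_{k_{j+1}}[$ can be arbitrarily long, so $B\cap(B-1)$ is typically infinite and nothing is gained.

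Your second attempt has two genuine gaps. First, you write ``pick a basis $(x_i)$ of a countable dense subspace'' --- but a dense subspace of the non-separable space $\ell^\infty$ has \emph{uncountable} Hamel dimension, so no such countable basis exists. This is in fact exactly what the paper proves: any linear subspace $M\subset L(B)\cup\{0\}$ has countable Hamel dimension, and therefore cannot be dense. Second, and more seriously, your product-interval argument does not produce a contradiction. Lemma~\ref{estimate} only says that $|L_z|$ lies \emph{somewhere} in $[l_j/(l_1\cdots l_{j-1}),\,l_1\cdots l_j]$; it does not say every integer in that interval is realized as some $|L_z|$. So even if this interval meets a hole $[n_k,n_{k+1}[$, you have not exhibited a vector $z\in M$ with $|L_z|$ in the hole.

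The paper's argument supplies precisely the missing mechanism. It shows that for each $k\in\mathcal K$, the set $M\cap L([2,n_k])$ contains at most $n_k-1$ linearly independent vectors; since every nonzero $z\in M$ lies in some $L([2,n_k])$ with $k\in\mathcal K$, any Hamel basis of $M$ is a countable union of finite sets. The bound $n_k-1$ comes from a ``peeling'' trick you do not have: if $x_1,\dots,x_{n_k}\in M\cap L([2,n_k])$ were independent, then by the exact $(n_k-1)$-lineability of $L([2,n_k])$ from \cite{LRS} some combination $z_0=\sum_i a_i x_i$ has $|L_{z_0}|\ge n_k+1$; now strip off one $x_i$ at a time to form $z_1,z_2,\dots$. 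At the first step where $|L_{z_{i+1}}|\le n_k$, Lemma~\ref{estimate} applied to $z_i=z_{i+1}+a\,x$ with $|L_x|\le n_k$ gives $|L_{z_i}|\le n_k^2$, so $n_k<|L_{z_i}|\le n_k^2<n_{k+1}$ --- a value in the forbidden block. This discrete intermediate-value step is the idea your sketch is reaching for but never pins down.
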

\begin{proof}
Let $(n_k)_{k\ge 1}$ such that 
$$
n_{k+1}>n^2_k
$$ 
and $\mathcal{K}$ an infinite set. We want to show that if 
$$
M\subset L\left([2,+\infty[\backslash\bigcup_{k\in\mathcal{K}}[n_k,n_{k+1}[\right)\cup\{0\}
$$ 
is a linear subspace, then its dimension is countable.\\

Let $k\in \mathcal{K}$. We show to this end that each family of linearly independent vectors in $M\cap L([2,n_k])$ has cardinality less than $n_k$. Indeed, it is shown in \cite{LRS} that the set $L([2,n_k])$ is exactly $(n_k-1)$-lineable. Hence, if there exists a family $\{x_1,\dots ,x_{n_k}\}$ of linearly independent vectors in $M\cap L([2,n_k])$, then there is a vector $z_0\in \text{span}\{x_1,\dots ,x_{n_k}\}$ with $|L_{z_0}|\geq n_k+1$. Now, we may write
$$
z_0=\sum_{i=1}^{n_k}a_ix_i
$$
where $a_i\in \mathbb{R}$, for $1\leq i\leq n_k$. Set 
$$
z_1=\sum_{i=1}^{n_k-1}a_ix_i.
$$
If $|L_{z_1}|\leq n_k$, then since $|L_{x_{n_k}}|\leq n_k$ as well, we get by Lemma~\ref{estimate} that 
$$
|L_{z_0}|\leq n_k^2.
$$
Otherwise, it holds that 
$$
|L_{z_1}|\geq n_k+1.
$$
Repeating the procedure, we find a vector
$z_i\in \text{span}\{x_1,\dots ,x_{n_k-i}\}$, for some $0\leq i<n_k$, satisfying that
$$
n_k+1\le |L_{z_i}|\leq n_k^2.
$$
But that means that 
$$
M\cap L([n_k+1,n_k^2])\ne\emptyset
$$
which is a contradiction, since $n_{k+1}>n^2_{k}$ and $k\in \mathcal{K}$. Now, this concludes the proof since, any Hamel basis of $M$ will be included in 
$$
\bigcup_{k\in \mathcal{K}}L([2,n_k])
$$
hence it will be countable. This prevents $M$ from being dense in $\ell^{\infty}$.
\end{proof}

From  Proposition~\ref{nk} and  Proposition~\ref{nk2} we get a big family of infinite sets $A$ such that $L(A)$ is lineable and not densely lineable.

\begin{cor}
For any $(n_k)_{k\ge 1}$ satisfying $n_{k+1}>n^2_k$ for any $k\ge 1$, there exists an infinite set $\mathcal{K}$ such that the set 
$$
L\left([2,+\infty[\backslash\bigcup_{k\in\mathcal{K}}[n_k,n_{k+1}[\right)
$$ 
is lineable but not densely lineable.
\end{cor}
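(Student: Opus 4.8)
The plan is that this corollary is nothing but the conjunction of the two previous propositions, applied to the \emph{same} sequence, so the proof will be short. First I would fix a sequence $(n_k)_{k\ge 1}$ of integers with $n_{k+1}>n_k^2$ for every $k\ge 1$; since $n_k^2\ge n_k$ this forces $n_{k+1}>n_k$, so $(n_k)_{k\ge 1}$ is increasing and is therefore an admissible input for Proposition~\ref{nk}. (If one insists on $n_k\ge 2$ for all $k$, note that $n_{k+1}>n_k^2$ makes $n_k\ge 2$ from some index on, and one may simply drop the finitely many initial terms, which does not change whether the resulting set is lineable or densely lineable.)

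Next I would apply Proposition~\ref{nk} to $(n_k)_{k\ge 1}$: it produces an infinite set $\mathcal{K}\subset\mathbb{N}$ for which
\[L\left([2,+\infty[\backslash\bigcup_{k\in\mathcal{K}}[n_k,n_{k+1}[\right)\]
is lineable. Then, keeping this very $\mathcal{K}$, I would invoke Proposition~\ref{nk2}: its hypothesis, $n_{k+1}>n_k^2$ for all $k\ge 1$, is exactly what we assumed, and its conclusion holds for \emph{every} infinite set, in particular for $\mathcal{K}$. Hence the same set is not densely lineable, and the corollary follows.

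The nearest thing to an obstacle is purely bookkeeping: Proposition~\ref{nk} does not let us prescribe $\mathcal{K}$ --- it merely hands one to us --- whereas Proposition~\ref{nk2} must be applied to a given infinite $\mathcal{K}$. But since Proposition~\ref{nk2} is valid for \emph{any} infinite $\mathcal{K}$, and both propositions are stated relative to the same underlying sequence $(n_k)_{k\ge 1}$, there is nothing to reconcile: the $\mathcal{K}$ supplied by Proposition~\ref{nk} is automatically legal input for Proposition~\ref{nk2}. So the argument goes through with no genuine difficulty.
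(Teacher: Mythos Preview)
Your proposal is correct and is exactly the argument the paper intends: the corollary is stated without proof, as an immediate combination of Proposition~\ref{nk} (which furnishes the infinite $\mathcal{K}$ making the set lineable) and Proposition~\ref{nk2} (which, for the same sequence and \emph{any} infinite $\mathcal{K}$, rules out dense lineability). Your remark that $n_{k+1}>n_k^2$ forces $(n_k)$ to be increasing is the only thing one needs to check before feeding the sequence into Proposition~\ref{nk}, and you handle it correctly.
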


Another example of infinite sets $A$ satisfying that $L(A)$ is lineable but not densely lineable could be given by the set of even numbers. Indeed, we can deduce from Theorem~\ref{A-1} that $L(2\mathbb{N})$ is not densely lineable. However, it is an open question to know if $L(2\mathbb{N})$ is lineable. This question can be found in \cite{LRS} and we state it again here because it seems to us very relevant and intriguing.

\begin{question}
Does $L(2\mathbb{N})\cup \{0\}$ contain an infinite-dimensional subspace?
\end{question}

\section{Non-separable spaceability}\label{spa}

Leonetti, Russo and Somaglia have shown in \cite{LRS} that while the set $L(\mathbb{N}\backslash\{1\})$ is not spaceable,  the sets $L(\omega)$ and $L(\mathfrak{c})$ are both spaceable. They have also remarked that $L(\mathfrak{c})\cup\{0\}$ even contains an isometric copy of $\ell^{\infty}$. However, it is not clear if an isometric copy of $\ell^{\infty}$ can also be found inside $L(\omega)\cup\{0\}$ or even if $L(\omega)\cup\{0\}$ contains a closed non-separable subspace. We answer in this section to this open question stated in \cite{LRS}. To this end, we first show that $L(\omega)\cup\{0\}$ and even $L(\mathbb{N}\cup\{\omega\})\cup\{0\}$ does not contain an isometric copy of $\ell^{\infty}$.

\begin{prop}
The set $L(\mathbb{N}\cup\{\omega\})\cup\{0\}$ does not contain an isometric copy of $\ell^{\infty}$.
\end{prop}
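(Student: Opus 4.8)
The plan is to argue by contradiction: suppose $T : \ell^{\infty} \to \ell^{\infty}$ is a linear isometry with $T(\ell^{\infty}) \subset L(\mathbb{N}\cup\{\omega\})\cup\{0\}$. The key tension to exploit is that $\ell^{\infty}$ contains an uncountable "almost disjoint" family of $0/1$-vectors whose pairwise differences are all at distance $1$ from each other and from $0$, while vectors in $L(\mathbb{N}\cup\{\omega\})$ have at most countably many accumulation points; I want to produce from the image of such a family a single vector that is forced to have $\mathfrak{c}$ accumulation points. Concretely, take an almost disjoint family $(D_\alpha)_{\alpha < \mathfrak{c}}$ of infinite subsets of $\mathbb{N}$, set $e_\alpha = 1_{D_\alpha} \in \ell^{\infty}$, and consider the images $u_\alpha = T(e_\alpha)$. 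For each $\alpha$ the vector $u_\alpha$ has a countable set of accumulation points $L_{u_\alpha}$; moreover, since $T$ is an isometry, $\|u_\alpha\|_\infty = 1$ and $\|u_\alpha - u_\beta\|_\infty = 1$ for $\alpha \neq \beta$ (as $\|e_\alpha - e_\beta\|_\infty = 1$).

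Next I would try to extract a structured subfamily. Since each $L_{u_\alpha}$ is a countable compact subset of $[-1,1]$, and there are only $\mathfrak{c}$ of them, a counting/pigeonhole argument over a suitable countable invariant (for instance, pick for each $\alpha$ a rational "threshold" $q_\alpha$ and an index $n_\alpha$ witnessing $|u_\alpha|_{n_\alpha} > q_\alpha$ close to $1$) lets me reduce to an uncountable subfamily that behaves uniformly. The goal is then to build a vector $w$ in the image — a limit of finite rational combinations of the $u_\alpha$, hence in the closed subspace $\overline{T(\ell^{\infty})} = T(\ell^{\infty})$ — whose sequence of coordinates returns to a set of values of size continuum along the coordinates $n_\alpha$. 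Since $T$ is surjective onto its image and an isometry, I can equally well work on the domain side: consider $v = T^{-1}(w)$ and exploit that membership of $w$ in $L(\mathbb{N}\cup\{\omega\})$ contradicts $|L_w| = \mathfrak{c}$. The cleanest route is probably to show directly that $T$ maps some explicit element of $\ell^{\infty}$ — built as an infinite series $\sum_k \lambda_k e_{\alpha_k}$ with carefully chosen disjointly-supported-up-to-$c_0$ pieces and scalars $\lambda_k$ — to a vector with $\mathfrak{c}$ accumulation points, by arranging that on the almost disjoint "tails" the image coordinates approximate a Cantor set of limit values.

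The main obstacle, and where the real work lies, is that an abstract isometry of $\ell^{\infty}$ need not be induced by a permutation and a sign change of coordinates (unlike for $c_0$ or $\ell^p$), so I cannot directly track where supports go. To get around this I expect to rely only on the two hard facts that survive: (i) $T$ preserves norms of all finite and infinite combinations, so the combinatorial $0/1$-structure of the $e_\alpha$ forces quantitative separation of the $u_\alpha$; and (ii) a vector with only countably many accumulation points cannot "spread" a continuum-sized almost disjoint family into asymptotically separated coordinate behaviour — more precisely, if $u_\alpha$ and $u_\beta$ stay $\varepsilon$-separated on a set of coordinates that is "large" in the appropriate sense for uncountably many pairs, one can diagonalize to a single image vector with uncountably many, hence $\mathfrak{c}$ many (by Cantor--Bendixson), accumulation points. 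Pinning down the right notion of "large set of coordinates" so that the diagonalization goes through while still landing inside $T(\ell^{\infty})$ is the delicate step; once that is in place, the contradiction with $|L_w| \in \mathbb{N}\cup\{\omega\}$ is immediate and the proof closes.
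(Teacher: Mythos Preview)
Your strategic instinct is right --- feed indicator functions of a large almost-disjoint family through the isometry and manufacture an image vector with continuum many accumulation points --- and this is exactly the shape of the paper's argument. But the proposal stops precisely at the point where the actual work begins, and the placeholder you leave (``pinning down the right notion of large set of coordinates so that the diagonalization goes through'') is not a detail; it is the whole proof.

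Concretely, knowing only that $\|u_\alpha\|=1$ and $\|u_\alpha-u_\beta\|=1$ gives you no handle on \emph{where} the coordinates of $u_\alpha$ are large, and without that your diagonalization has nothing to grab. The paper extracts two sharp facts from the isometry that you are missing. First, using $\|1_A\|=\|1_B\|=\|1_A\pm 1_B\|=1$ for disjoint $A,B$, one gets that whenever $|P_n(\phi(1_A))|\to 1$ along a subsequence, $P_n(\phi(1_B))\to 0$ along the same subsequence: disjointness on the domain side forces asymptotic ``orthogonality'' on the range side. Second, by partitioning any infinite $A$ into more pieces than the finite set $\{n:|P_n(\phi(1_A))|=1\}$ could accommodate, one shows that for every infinite $A$ there \emph{is} such a subsequence, so $1$ or $-1$ lies in $L_{\phi(1_A)}$. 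A short stabilization then gives an infinite $A$ with, say, $1\in L_{\phi(1_B)}$ for every infinite $B\subset A$. Only with these lemmas in hand can one build the vector: the paper takes a dyadic tree $(A_{j,k})$ of infinite subsets of $A$, sets $y_j=\sum_k (k/2^j)1_{A_{j,k}}$, checks $y_j\to y$ in norm, and uses the two facts above to see that $L_{\phi(y_j)}\supset\{k/2^j:0\le k<2^j\}$, whence $L_{\phi(y)}\supset[0,1]$.

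Note also that your suggested witness $\sum_k\lambda_k e_{\alpha_k}$, a \emph{countable} series over pairwise almost-disjoint supports, is the wrong shape: without the tree structure (nested, not almost-disjoint, supports) and without the two lemmas above, there is no mechanism forcing the image to pick up a Cantor set of limit values. The pigeonhole over ``a rational threshold $q_\alpha$ and an index $n_\alpha$'' you sketch does not produce that mechanism either, since a single witnessing coordinate per $\alpha$ says nothing about accumulation points.
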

\begin{proof}
Assume that there exists a linear isometry $\phi:\ell^{\infty}\to L(\mathbb{N}\cup\{\omega\})\cup\{0\}$. Let $A$ and $B$ be non-empty disjoint subsets of $\mathbb{N}$ and let $P_n(x)$ be the $n$th coordinate of $x$. Since $\|\phi(1_{A})\|=\|\phi(1_{B})\|=\|\phi(1_{A}+1_{B})\|=\|\phi(1_{A}-1_{B})\|=1$, we first remark that if $|P_n(\phi(1_{A}))|=1$ then $P_n((\phi(1_{B}))=0$ and if there exists an increasing sequence $(n_k)$ such that $\lim_k |P_{n_k}(\phi(1_{A}))|=1$ then $\lim_k P_{n_k}(\phi(1_{B}))=0$. 

We now show that for any infinite set $A$ in $\mathbb{N}$, there exists an increasing sequence $(n_k)$ such that $\lim_k|P_{n_k}\phi(1_A)|=1$. Assume that it is not the case. Then, since $\|\phi(1_A)\|=1$, there exists a non-empty finite set $\mathcal{N}$ such that for any $n\in \mathcal{N}$, $|P_n\phi(1_A)|=1$ and for any $n\notin \mathcal{N}$, $|P_n\phi(1_A)|<1$. Let $N=|\mathcal{N}|$ and $A_{1},\dots,A_{N+1}$ be a partition of $A$ in infinite sets. We have that the sets $\mathcal{N}_k=\{n:|P_n\phi(1_{A_k})|=1\}$ are disjoint and $\mathcal{N}_k\subset \mathcal{N}$ for any $1\le k\le N+1$. 
By definition of $N$, it implies that there exists $1\le k\le N+1$ such that $A_k$ is empty. This means that there exists an increasing sequence $(n_j)$ such that $|P_{n_j}(\phi(1_{A_k}))|$ tends to $1$ and therefore $|P_{n_j}(\phi(1_{A}))|$ tends to $1$, which is a contradiction.

In other words, we have shown that for any infinite set $A$ in $\mathbb{N}$, $1$ or $-1$ belongs to $L_{\phi(1_{A})}$. Note that if $B$ is an infinite subset of $A$ and $1\in L_{\phi(1_{B})}$ (resp. $-1\in L_{\phi(1_{B})}$) then $1\in L_{\phi(1_{A})}$ (resp. $-1\in L_{\phi(1_{A})}$). There then exists an infinite set $A$ in $\mathbb{N}$ such that $1\in L_{\phi(1_{B})}$ for any infinite subset $B$ of $A$ or $-1\in L_{\phi(1_{B})}$ for any infinite subset $B$ of $A$. Indeed, it suffices to consider $A$ such that $1\in L_{\phi(1_{A})}$ and $-1\notin L_{\phi(1_{A})}$ or such that $-1\in L_{\phi(1_{A})}$ and $1\notin L_{\phi(1_{A})}$. If such a set does not exist, it means that for any infinite set the real numbers $1$ and $-1$ belong to $L_{\phi(1_{A})}$ and then any infinite set $A$ will satisfy the desired property.

Without loss of generality, assume that there exists an infinite set $A$ such that for any infinite subset $B$ of $A$, $1\in L_{\phi(1_{B})}$. We consider a family $(A_{j,k})_{j\ge 1,0\le k\le 2^j-1}$ of infinite subsets of $A$ such that $A_{j,k}\cap A_{j,k'}=\emptyset$ if $k\ne k'$ and $A_{j,k}=A_{j+1,2k}\cup A_{j+1,2k+1}$. We let
\[y_j=\sum_{k=0}^{2^{j}-1}\frac{k}{2^j}1_{A_{j,k}}.\]
The sequence $(y_j)_{j\ge 1}$ is convergent in $\ell^{\infty}$ because
\begin{align*}
y_{j+1}-y_j&=\sum_{k=0}^{2^{j+1}-1}\frac{k}{2^{j+1}}1_{A_{j+1,k}}-\sum_{k=0}^{2^{j}-1}\frac{k}{2^j}1_{A_{j,k}}\\
&=\sum_{k=0}^{2^{j}-1}(\frac{2k}{2^{j+1}}-\frac{k}{2^j})1_{A_{j+1,2k}}+
\sum_{k=0}^{2^{j}-1}(\frac{2k+1}{2^{j+1}}-\frac{k}{2^j})1_{A_{j+1,2k+1}}\\
&=\sum_{k=0}^{2^{j}-1}\frac{1}{2^{j+1}}1_{A_{j+1,2k+1}}
\end{align*}
and thus $\|y_{j+1}-y_j\|=\frac{1}{2^{j+1}}$. We denote by $y$ the limit of $(y_j)_{j\ge 1}$ and we get that the sequence $(\phi(y_j))_{j\ge 1}$ tends to $\phi(y)$.

By properties of $A$, we can remark that for any $j\ge 1$, we have
\[L_{\phi(y_j)}\supset \{\frac{k}{2^j}:0\le k\le 2^j-1\}\] since for any $0\le k\le 2^j-1$, there exists an increasing sequence $(n_l)$ such that $P_{n_l}(\phi(1_{A_{j,k}}))$ tends to $1$ and $P_{n_l}(\phi(1_{A_{j,k'}}))$ tends to $0$ when $k'\ne k$. This allows us to conclude that $L_{\phi(y)}$ contains $[0,1]$ which is not countable. Indeed, $L_{\phi(y)}$ is a closed set and for any $\varepsilon>0$, there exists $j_0\ge 1$ such that for any $j\ge j_0$,
\[\|\phi(y_j)-\phi(y)\|<\varepsilon.\] 
In particular, if $j\ge j_0$ and $a\in L_{\phi(y_j)}$ then $d_{\infty}(L_{\phi(y)},a)\le \varepsilon$. We get the desired conclusion since $L_{\phi(y_j)}\supset \{\frac{k}{2^j}:0\le k\le 2^j-1\}$.
\end{proof}

Although the set $L(\omega)\cup\{0\}$ does not contain an isometric copy of $\ell^{\infty}$, we can succeed to construct a  closed non-separable subspace in $L(\omega)\cup\{0\}$ by using another approach.

\begin{prop}
The set $L(\omega)\cup\{0\}$ contains a closed non-separable subspace.
\end{prop}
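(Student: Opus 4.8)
The idea is to construct an explicit closed non-separable subspace. Let $(E_s)_{s\in 2^{<\omega}}$ be a partition of $\mathbb{N}$ into infinite sets indexed by the nodes of the binary tree $2^{<\omega}$, and for $\alpha\in 2^\omega$ set
\[
f_\alpha=\sum_{n\ge 1}\frac{n}{n+1}\,1_{E_{\alpha|_n}}\in\ell^\infty .
\]
Since each value $\frac{n}{n+1}$ is attained on the infinite set $E_{\alpha|_n}$ and the value $0$ is attained infinitely often, one has $L_{f_\alpha}=\{0,1\}\cup\{\frac{n}{n+1}:n\ge 1\}$, so $|L_{f_\alpha}|=\omega$; and if $\alpha,\beta$ first differ at level $n_0$ then $f_\alpha-f_\beta$ equals $\pm\frac{n}{n+1}$ on $E_{\alpha|_n}\cup E_{\beta|_n}$ for $n\ge n_0$ and $0$ elsewhere, whence $\|f_\alpha-f_\beta\|_\infty=1$. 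Thus $\{f_\alpha:\alpha\in 2^\omega\}$ is an uncountable $1$-separated subset of $M:=\overline{\text{span}}\{f_\alpha:\alpha\in 2^\omega\}$, so $M$ is a closed non-separable subspace, and it remains only to check that $M\subseteq L(\omega)\cup\{0\}$.

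Fix $z\in M$. Every finite combination $\sum_p\lambda_pf_{\alpha_p}$ is constant on each block $E_s$, with value $\frac{|s|}{|s|+1}\Sigma_s$ for $|s|\ge 1$, where $\Sigma_s=\sum\{\lambda_p:\alpha_p\text{ extends }s\}$; hence its uniform limit $z$ is constant on each $E_s$ as well, and writing $z=\sum_s w_s1_{E_s}$, $w_s=\frac{|s|}{|s|+1}u_s$ for $|s|\ge 1$, one has, as for any such sequence, $L_z=\overline{\{w_s:s\in 2^{<\omega}\}}$. Approximating $z=\lim_m z^{(m)}$ with $z^{(m)}=\sum_p\lambda_p^{(m)}f_{\alpha_p^{(m)}}$ gives $u_s=\lim_m\Sigma_s^{(m)}$, together with two structural facts: (i) $u$ is additive on the tree, $u_s=u_{s0}+u_{s1}$ (a branch extending $s$ extends $s0$ or $s1$); and (ii) $\sup_s|u_s-\Sigma_s^{(m)}|\to 0$, i.e. $u$ is a uniform limit of the finitely supported ``atomic'' tree functions $\Sigma^{(m)}$.

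From (ii) I plan to deduce that $L_z$ is countable. First, for each $\alpha\in 2^\omega$ the limit $v_\alpha:=\lim_n u_{\alpha|_n}$ exists, because $\Sigma^{(m)}_{\alpha|_n}$ stabilises in $n$ (to the mass $\Sigma^{(m)}$ puts at the point $\alpha$, possibly $0$), so the oscillation of $(u_{\alpha|_n})_n$ is at most $2\sup_s|u_s-\Sigma^{(m)}_s|$ for every $m$. Second, $\{\alpha:|v_\alpha|\ge\varepsilon\}$ is finite for each $\varepsilon>0$: choosing $m$ with $\sup_s|u_s-\Sigma^{(m)}_s|<\varepsilon/3$, every such $\alpha$ must be one of the finitely many atoms of $\Sigma^{(m)}$. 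Hence $V:=\{v_\alpha:\alpha\in 2^\omega\}\cup\{0\}$ is a countable compact subset of $\mathbb{R}$. Finally, any limit $t=\lim_k u_{s_k}$ with $|s_k|\to\infty$ lies in $V$: for fixed $\varepsilon$ take $m$ as above; beyond the finite level at which the finitely many atoms of $\Sigma^{(m)}$ have separated, $\Sigma^{(m)}_{s_k}$ is either $0$ or an atom mass of $\Sigma^{(m)}$, which is within $\varepsilon/3$ of $V$, so $u_{s_k}$ is within $2\varepsilon/3$ of $V$; now let $\varepsilon\to 0$. Since for each length there are only finitely many nodes, $\overline{\{u_s\}}$, and hence $L_z=\overline{\{w_s\}}$, is countable. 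The crux is precisely this step: a bounded additive tree function can have $\{u_s\}$ uncountable (these functions are exactly the bounded finitely additive signed measures on $2^\omega$), and it is the \emph{uniform} approximation by atomic ones that forces the branch-limits $v_\alpha$ into a single null sequence and kills any diffuse accumulation of the block-values.

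It remains to see that $L_z$ is infinite when $z\ne 0$, and here (i) enters. If $z\ne 0$, then some $u_{s_0}\ne 0$ with $|s_0|\ge 1$, and by (i) there is a branch $\alpha$ extending $s_0$ with $u_{\alpha|_m}\ne 0$ for all $m\ge|s_0|$. If $v_\alpha=0$, then $(w_{\alpha|_m})_m$ is a non-zero sequence tending to $0$ and so takes infinitely many values. If $v_\alpha\ne 0$, then either $(w_{\alpha|_m})_m$ takes infinitely many values, or it is eventually constant equal to $v_\alpha$, which forces $u_{\alpha|_m}=\frac{m+1}{m}v_\alpha$ for large $m$; then by (i) the sibling of $\alpha|_{m+1}$ carries the value $u=\frac{v_\alpha}{m(m+1)}$, so the corresponding values $w=\frac{v_\alpha}{m(m+2)}$ are pairwise distinct. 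In every case $\{w_s:s\in 2^{<\omega}\}$ is infinite, so $|L_z|=|\overline{\{w_s\}}|=\omega$ and $z\in L(\omega)$, which completes the proof.
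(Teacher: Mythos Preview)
Your proof is correct and takes a genuinely different route from the paper's. The paper builds its generating vectors $x_s$ ($s\in\mathbb{R}$) on an \emph{almost disjoint} family $(A_s)_{s\in\mathbb{R}}$ of subsets of $\mathbb{N}$, further partitioning each $A_s$ into blocks carrying the weights $c_m=2^{-m}$; because distinct $x_s,x_t$ overlap only finitely, any $z\in M$ is (eventually along each $A_{s_k}^m$) described by a single sequence of coefficients $(a_k)$, and one computes directly that $L_z=\{a_kc_m:k,m\ge 0\}\cup\{0\}$. Your construction instead indexes the blocks by the nodes of the binary tree and lets the generating vectors $f_\alpha$ overlap heavily along common initial segments; the price is that the block values are governed by a bounded tree-additive function $u$, and you must exploit the \emph{uniform} approximability of $u$ by finitely supported atomic tree functions $\Sigma^{(m)}$ to force all accumulation of the block values into the countable null set $V=\{v_\alpha\}\cup\{0\}$ of branch-limits. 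The paper's route yields a cleaner closed-form description of $L_z$ and a shorter verification; your route avoids invoking an almost disjoint family and stays entirely within an explicit partition of $\mathbb{N}$, at the cost of the more delicate compactness argument for the branch-limits and the case analysis (using additivity and the sibling trick) needed to show $\{w_s\}$ is infinite when $z\ne 0$.
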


\begin{proof}
  Let $(A_s)_{s\in \mathbb{R}}$ be a family of infinite subsets of $\mathbb{N}$ such that for any $s\ne t$, the intersection $A_s\cap A_t$ is finite. Such a family can be found in \cite[Lemma 2.1]{Dim}. For each $s\in \mathbb{R}$, we consider a partition of $A_s$ into infinite subsets $(A_s^m)_{m=0}^{\infty}$. If $c_m=\frac{1}{2^m}$, $m\geq 0$, we let $(x_s)_{s\in \mathbb{R}}\subset L(\omega)$ by
  $$
  x_{s,n}=\begin{cases}
      &c_m \,\, \text{if} \,\, n\in A_s^m,\\
      &0 \,\,\,\ \ \text{otherwise.}
  \end{cases}
  $$
  We notice that $L_{x_s}=\{c_m: m\geq 0\}\cup \{0\}$. Let
  $$
  M=\overline{\text{span}}\{x_s: s\in \mathbb{R}\}.
  $$
  It is clear that $M$ is closed and non-separable since it contains the uncountable family $(x_s)_{s\in \mathbb{R}}$ and that for any $s\ne t$, $\|x_s-x_t\|=1$. 

  We will show that $M\subset L(\omega)\cup\{0\}$. If $x\in M\backslash\{0\}$, then there is a sequence $(y_N)_{N=1}^{\infty}\subset \text{span}\{x_s: s\in \mathbb{R}\}$ such that $y_N\rightarrow x$. We may then find a sequence $(s_k)_{k\ge 0}$ of distinct real numbers such that
  $$
  y_N=\sum_{k=0}^{\infty}a^N_kx_{s_k}
  $$
  where $a^N_k\in \mathbb{R}$ and where for any $N$, only finitely many coefficients $a^N_k$ are non-zero. We also observe that 
  $$
  L_{y_N}=\{a^N_kc_m: k,m \geq 0\}\cup\{0\}.
  $$
 By the fact that the coordinates of $y_N$ are eventually equal to $a^N_k$ along the set $A^0_{s_k}$ and since $(y_N)_{N=1}^{\infty}$ is a Cauchy sequence, we get that if $\varepsilon >0$, there exists $N_0\in \mathbb{N}$, such that for each $N_1,N_2\geq N_0$
  $$
  |a^{N_1}_k-a^{N_2}_k|\leq \|y_{N_1}-y_{N_2}\|<\varepsilon,
  $$
  for each $k\geq 0$. Thus, there are $a_k\in \mathbb{R}$ ($k\geq 0$) such that 
   $$
   a^N_k\rightarrow a_k \quad\text{as $N\rightarrow \infty$, uniformly for $k\geq 0$.}
   $$
   In particular, there is $L>0$ such that $|a^N_k|\leq L$ for all $N\geq 1$ and all $k\geq 0$. 

   We next prove the fact that for every $\varepsilon >0$, there are $N_0\in \mathbb{N}$, $K\geq 0$ and $M\geq 0$, such that if for some $N\geq N_0$ it holds that $|y_{N,n}|\geq \varepsilon$, then $n\in \bigcup_{k\leq K}\bigcup_{m\leq M}A^m_{s_k}$. Indeed, again by the fact that $(y_N)_{N=1}^{\infty}$ is a Cauchy sequence, we get $N_0\in \mathbb{N}$ such that for each $N\geq N_0$, $\|y_N-y_{N_0}\|<\frac{\varepsilon}{2}$. Let 
   $$
   K=\max\{k\geq 0: a^{N_0}_k\neq 0\}
   $$
   and $M\geq 0$ such that
   $$
   2c_{M+1}LK<\varepsilon.
   $$
   Let $n$ such that $|y_{N,n}|\geq \varepsilon$ for some $N\ge N_0$. If $n\notin \bigcup_{k\leq K}\bigcup_{m\leq M}A^m_{s_k}$ then 
   \[|y_{N_0,n}|\le \sum_{k=1}^K |a^N_k| c_{M+1} \leq KLc_{M+1}<\frac{\varepsilon}{2}\]
   and thus
   \[|y_{N,n}| <\|y_{N}-y_{N_0}\|+\frac{\varepsilon}{2}\le \varepsilon\]
   for all $N\geq N_0$ which is a contradiction.

   We now show that 
   $$
   L_x=\{a_kc_m: k,m\geq 0\} \cup \{0\}
   $$
   and thus that $x\in L(\omega)$.
   Let $a\in L_x$ with $a\ne 0$. We consider an increasing sequence $(n_l)$ such that
   $$
   \lim_{l\to \infty}x_{n_l}=a.
   $$
   Since $y_N\rightarrow x$, it holds that there is $N_0\in \mathbb{N}$ such that for every $N\ge N_0$, $|y_{N,n_l}|>\frac{|a|}{2}$ eventually. The fact of the previous paragraph applied for $\varepsilon=\frac{|a|}{2}$ gives us that there are $K,M\geq 0$ such that
   $$
   (n_l)\subset \bigcup_{k\leq K}\bigcup_{m\leq M}A^m_{s_k}
   $$
   eventually. Passing to a subsequence, we can assume that 
   $$
   (n_l)\subset A^m_{s_k}
   $$
   for some $k\leq K$ and $m\leq M$. We now remark that $(y_{N,n_l})_N$ converges to $x_{n_l}$ uniformly in $l$ and that for any $N$, $(y_{N,n_l})_l$ converges to $a^N_kc_m$. It follows that
   $$
   a=\lim_{l\rightarrow \infty}x_{n_l}=\lim_{l\rightarrow \infty} \lim_{N\rightarrow \infty}y_{N,n_l}=\lim_{N\rightarrow \infty}\lim_{l\rightarrow \infty}y_{N,n_l}= \lim_{N\rightarrow \infty}a^N_kc_m=a_kc_m.
   $$
   That gives us that $L_x\subset \{a_kc_m: k,m\geq 0\}\cup \{0\}$. 

   The reverse inclusion is even easier. For each $k,m\geq 0$, it suffices to consider $(n_l)$ the increasing enumeration of $A_{s_k}^m$ and to deduce as above that $\lim_{l\rightarrow \infty}x_{n_l}=a_kc_m$. Finally, since $L_x$ is closed and $c_m$ tends to $0$, we conclude that $L_x= \{a_kc_m: k,m\geq 0\}\cup \{0\}$.

\end{proof}

\end{document}